\documentclass[12pt]{amsart}
\usepackage{amsfonts,amsmath,amsthm,amscd,amssymb,latexsym}
\usepackage[english]{babel}

\newtheorem{Th}{Theorem}

\newtheorem{Rm}{Remark}

\newtheorem*{Cj}{Conjecture}
\newcommand{\PP}{\mathcal{P}}
\newcommand{\NN}{\mathbb{N}}
\newcommand{\w}{\omega}

\textwidth=17cm \textheight=22.5cm \hoffset=-2.5cm \voffset=0cm

\begin{document}

\title{A conjecture on partitions of groups}
\author{Igor Protasov, Sergii Slobodianiuk}
\date{}

\subjclass[2010]{03E05, 20B07, 20F69}

\keywords{partition of groups, covering number.}

\maketitle

\begin{abstract}
We conjecture that every infinite group $G$ can be partitioned into countably many cells $G=\bigcup\limits_{n\in\w}A_n$ such that $cov(A_nA_n^{-1})=|G|$ for each $n\in\w$. Here $cov(A)=\min\{|X|:X\subseteq G, G=XA\}$. We confirm this conjecture for each group of regular cardinality and for some groups (in particular, Abelian) of an arbitrary cardinality.
\end{abstract}

\section*{Introduction}
For any finite partition of an infinite group, at last one cell of the partition has a rich combinatorial structure. The Ramsey Theory of groups gives a plenty of concrete examples (see \cite{b4}, \cite{b7})

On the other hand, the subsets of a group could be classified by their size. For corresponding partition problems see the survey \cite{b8}.

Given a group $G$ and a subset $A$ of $G$, we denote
$$cov(A)=\min\{|X|:X\subseteq G, G=XA\}.$$
The covering number $cov(A)$ evaluates a size of $A$ inside $G$ and, if $A$ is a subgroup, coincides with the index $|G:A|$.

It is easy to partition each infinite group $G=A_1\cup A_2$ so that $cov(A_1)$ and $cov(A_2)$ are infinite. Moreover, if $|G|$ is regular, there is a partition $G=\bigcup\limits_{\alpha<|G|}H_\alpha$ such that $cov(G\setminus H_\alpha)=|G|$ for each $\alpha<|G|$. In particular, there is a partition $G=A_1\cup A_2$ such that $cov(A_1)=cov(A_2)=|G|$. See \cite{b5}, \cite{b9}, \cite{b10} for these statements, their generalizations and applications.

However, for every $n\in\NN$, there is a (minimal) natural number $\Phi(n)$ such that, for every group $G$ and every partition $G=A_1\cup...\cup A_n$, $cov(A_iA_i^{-1})\le\Phi(n)$ for some cell $A_i$ of the partition. It is still an open problem posed in \cite[Problem 13.44]{b6} whether $\Phi(n)=n$. For the history and results behind this problem see the survey \cite{b1}.

In \cite[Question F]{b2}, J. Erde asked whether, given a partition $\PP$ of an infinite group $G$ such that $|\PP|<|G|$, there is $A\in\PP$ such that $cov(AA^{-1})$ is finite. After some simple examples answering this question extremely negatively, we run into the following conjecture.

\begin{Cj} 
Every infinite group $G$ of cardinality $\varkappa$ can be partitioned $G=\bigcup\limits_{n<\w}A_n$ so that $cov(A_nA_n^{-1})=\varkappa$ for each $n\in\w$. 
\end{Cj}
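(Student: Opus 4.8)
The plan is to first translate the covering condition into a statement about how the left translates of each cell spread out. Since $g\notin YA_nA_n^{-1}$ is equivalent to $gA_n\cap YA_n=\emptyset$, one has $cov(A_nA_n^{-1})=\varkappa$ precisely when, for every $Y\subseteq G$ with $|Y|<\varkappa$, there is $g\in G$ with $gA_n\cap YA_n=\emptyset$; equivalently, $G\setminus A_nA_n^{-1}$ is \emph{thick}, i.e. it contains a translate $gY$ of every set $Y$ of size $<\varkappa$. Thus the whole task becomes: partition $G$ into countably many cells each of whose difference set $A_nA_n^{-1}$ avoids a translate of every small set.

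Next I would exploit the regularity of $\varkappa$. Fix a well-ordering $G=\{g_\alpha:\alpha<\varkappa\}$ and put $G_\alpha=\{g_\beta:\beta<\alpha\}$, so that $G=\bigcup_{\alpha<\varkappa}G_\alpha$ is an increasing union with $|G_\alpha|<\varkappa$. Because $\varkappa$ is regular, every $Y$ with $|Y|<\varkappa$ lies inside some $G_\alpha$, and therefore $cov(A_nA_n^{-1})=\varkappa$ reduces to the single family of requirements $G_\alpha A_nA_n^{-1}\neq G$ for all $n<\w$ and all $\alpha<\varkappa$. There are only $\varkappa$ such requirements, so I would enumerate them as $(R_\xi)_{\xi<\varkappa}$ and meet them one at a time during a transfinite recursion of length $\varkappa$ that simultaneously builds the colouring $c:G\to\w$ (with $A_n=c^{-1}(n)$) and, for $R_\xi=(n,\alpha)$, reserves a fresh witness $w_\xi=g_\gamma$ together with the pledge $w_\xi\notin G_\alpha A_nA_n^{-1}$.

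The step I expect to be the main obstacle is keeping a reserved witness valid. The difficulty is that each $A_n$ must have size $\varkappa$ (a countable union of small sets cannot exhaust $G$ when $cf(\varkappa)>\w$), so $A_nA_n^{-1}$ keeps growing, and a later element put into colour $n$ may create a difference that swallows $w_\xi$; moreover protecting one witness forbids colour $n$ on the whole set $G_\alpha^{-1}w_\xi A_n$, which can have size $\varkappa$. To control this I would maintain the invariant that at stage $\alpha$ only $|\alpha|<\varkappa$ requirements are active, so that the set of \emph{values} forbidden to $g_\alpha$ by any single active requirement $(n,\beta,w)$, namely $G_\beta^{-1}wA_n^{(\alpha)}\cup w^{-1}G_\beta A_n^{(\alpha)}$, is a product of two sets of size $<\varkappa$ and hence has size $<\varkappa$. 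The delicate point is to schedule the requirements and to choose the witnesses fresh enough that at no stage are all countably many colours simultaneously forbidden for $g_\alpha$, which is what keeps the recursion from getting stuck. Once this bookkeeping is arranged, the limit colouring yields the desired partition, confirming the conjecture for every $G$ of regular cardinality.

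Finally, for groups of arbitrary (in particular singular) cardinality I would pass to quotients: if $\phi\colon G\to Q$ is onto with $|Q|=|G|$ and $Q$ already carries a good partition $Q=\bigcup_n B_n$, then, since $\phi^{-1}(B_n)\phi^{-1}(B_n)^{-1}=\phi^{-1}(B_nB_n^{-1})$ and $cov_G(\phi^{-1}(B))=cov_Q(B)$, the preimages $A_n=\phi^{-1}(B_n)$ partition $G$ with $cov(A_nA_n^{-1})=\varkappa$. For Abelian $G$ I would use the structure theory to produce such a quotient of the same cardinality but of a shape (for instance a large direct sum) for which an explicit sparse partition is available. The genuinely hard case, and the reason the statement is only a conjecture, is a group of singular cardinality admitting no such convenient quotient or filtration: there the reduction to the regular case breaks down and a new idea appears to be needed.
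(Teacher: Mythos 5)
Your opening reductions are sound: $g\notin YA_nA_n^{-1}$ iff $gA_n\cap YA_n=\varnothing$, for regular $\varkappa$ it suffices to meet the requirements $G_\alpha A_nA_n^{-1}\neq G$, and the transfer of the problem along a surjection $\phi:G\to Q$ with $|Q|=|G|$ (via $\phi^{-1}(B)\phi^{-1}(B)^{-1}=\phi^{-1}(BB^{-1})$ and $cov_G(\phi^{-1}(B))=cov_Q(B)$) is exactly the paper's first Remark. But the heart of your argument --- the transfinite greedy recursion for regular $\varkappa$ --- has a genuine gap, and it is precisely the step you yourself flag and then wave away with ``once this bookkeeping is arranged.'' At stage $\alpha$ the element $g_\alpha$ \emph{must} receive one of countably many colours, while each active requirement $(n,\beta,w)$ forbids colour $n$ on the set $w^{-1}G_\beta A_n^{(\alpha)}\cup G_\beta^{-1}wA_n^{(\alpha)}$. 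That each such set is small does not help: what you need is that $g_\alpha$ avoids at least one of the countably many families $F_n^{(\alpha)}$ of forbidden points, i.e.\ that $g_\alpha\notin\bigcap_{n<\w}F_n^{(\alpha)}$, and this intersection can perfectly well be nonempty. Concretely, if $w_n^{-1}g_\alpha$ happened to be coloured $n$ at an earlier stage for every $n$ (the $w_n$ being the witnesses for the colour-$n$ requirements), then $g_\alpha\in w_nA_n^{(\alpha)}\subseteq F_n^{(\alpha)}$ for all $n$ and the recursion is stuck. No scheduling of requirements alone prevents this; the colouring and the witnesses would have to be co-designed, and you give no mechanism for that. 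The paper avoids the difficulty entirely by an explicit, non-greedy colouring: it fixes a filtration $G=\bigcup_{\alpha<\varkappa}G_\alpha$ by subgroups of smaller cardinality, writes each $g$ canonically as a product $x_{\alpha_{s(g)}}\cdots x_{\alpha_1}$ of coset representatives, and colours $g$ by the finite sequence $f(\alpha_{s(g)})\cdots f(\alpha_1)\in Seq(\w)$, where $f(\beta+n)=n$ for $\beta$ limit. The verification that each colour class $H_s$ satisfies $cov(H_sH_s^{-1})\ge cf(\varkappa)$ is then a direct computation with the canonical forms, with no requirements to juggle.

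A secondary weak point is the Abelian case. You propose to find a \emph{quotient} of $G$ of full cardinality that is a large direct sum; that is not a standard consequence of the structure theory and I see no reason it holds for an arbitrary Abelian group. The paper instead uses the classical \emph{embedding} of any Abelian group into a direct product of countable groups, and its Theorem~\ref{t2} is deliberately stated for \emph{subgroups} $G$ of $\otimes_{\alpha<\varkappa}H_\alpha$ with $|H_\alpha|\le\lambda<\varkappa$: there one colours $g$ by the cardinality $n$ of its support and observes that modulo a small coordinate set $S$ the set $KA_nA_n^{-1}$ only reaches elements of support $\le 2n$, so it misses elements of $G_T$ of large support. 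You would also want to record the easy observation (the paper's second Remark) that the case $cf(\varkappa)=\aleph_0$ is trivial, since then $G$ is a countable union of sets of cardinality $<\varkappa$. As it stands, your proposal establishes neither the regular case nor the Abelian case; it is an outline whose critical step is exactly the open difficulty.
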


In this note we confirm Conjecture for every group of regular cardinality and for some groups (in particular, Abelian) of an arbitrary cardinality.

\section*{Results}

For a cardinal $\varkappa$, we denote by $cf(\varkappa)$ the cofinality of $\varkappa$.
\begin{Th}
Let $G$ be an infinite group of cardinality $\varkappa$. Then there exists a partition $G=\bigcup\limits_{n\in\w}A_n$ such that $cov(A_nA_n^{-1})\ge cf(\kappa)$ for each $n\in\w$.
\end{Th}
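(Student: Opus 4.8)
The plan is to reduce the lower bound on $cov(A_nA_n^{-1})$ to a statement about a single increasing chain, exploiting that $\lambda:=cf(\varkappa)$ is regular. I fix any increasing chain of subgroups $\{H_\gamma:\gamma<\lambda\}$ with $\bigcup_{\gamma<\lambda}H_\gamma=G$. Since $\lambda$ is regular, every $X\subseteq G$ with $|X|<\lambda$ lies in some $H_\gamma$: the ordinals $\{\min\{\delta:x\in H_\delta\}:x\in X\}$ form a subset of $\lambda$ of size $<\lambda=cf(\lambda)$, hence are bounded. Consequently, if $H_\gamma B\ne G$ for every $\gamma<\lambda$, then no $X$ with $|X|<\lambda$ satisfies $XB=G$, i.e.\ $cov(B)\ge\lambda$. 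Applying this with $B=A_nA_n^{-1}$, and using that $H_\gamma B=\bigcup_{b\in B}H_\gamma b$ is all of $G$ exactly when $B$ meets every right coset of $H_\gamma$, it suffices to build the partition so that, for every $n$ and every $\gamma$, the set $A_nA_n^{-1}$ misses at least one right coset of $H_\gamma$.

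First I would dispose of the case $\lambda=\w$. Here $\varkappa$ is a countable union of smaller cardinals, so $G$ splits into countably many cells $A_n$ with $|A_n|<\varkappa$ (finite cells if $G$ is countable). Then $|A_nA_n^{-1}|\le|A_n|^2<\varkappa$, whence any covering of $G$ by translates of $A_nA_n^{-1}$ needs $\varkappa$ of them, so $cov(A_nA_n^{-1})=\varkappa\ge\lambda$. This already settles every group whose cardinality has countable cofinality, and shows that the genuine content lies in the case $\lambda>\w$.

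For $\lambda>\w$ I would first secure the chain: taking a continuous cofinal sequence $(\theta_\gamma)_{\gamma<\lambda}$ in $\varkappa$ and a well-ordering $G=\{g_\xi:\xi<\varkappa\}$, the subgroups $H_\gamma=\langle g_\xi:\xi<\theta_\gamma\rangle$ are proper (each of size $<\varkappa$), increasing, continuous, with union $G$; the only groups admitting no such chain are finitely generated, and those are countable, hence already covered by the case $\lambda=\w$. Since $\lambda>\w$, the cells cannot all be small, so at least one cell is cofinal in the chain and its difference set must be controlled directly. I would build the colouring $c:G\to\w$ by transfinite recursion, colouring one element at a time, while scheduling, for each pair $(n,\gamma)$, a stage at which a witness coset $H_\gamma g_{n,\gamma}$ is reserved and thereafter protected, meaning that no two elements of colour $n$ are ever allowed to have $aa'^{-1}\in H_\gamma g_{n,\gamma}$. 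Two bookkeeping facts make each step feasible. At any stage $\xi<\varkappa$ the current colour-$n$ class has size $<\varkappa$, so its difference set meets fewer than $[G:H_\gamma]=\varkappa$ right cosets of $H_\gamma$, and a still-unmet coset is available to reserve. And, provided the reservations are scheduled slowly enough that at each stage only $<\varkappa$ elements are constrained, the set of elements forbidden in \emph{every} colour has size $<\varkappa$, so a fresh element admitting some legal colour can always be found.

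The hard part is to run this recursion so that it both honours all reservations \emph{and} assigns a colour to every element of $G$. The two demands pull against each other: protecting a coset of $A_n$ removes colour $n$ from a growing set of elements, and an element could in principle be barred from all countably many colours at once and never get coloured. Overcoming this requires a priority/injury-style bookkeeping in which the witnesses $g_{n,\gamma}$ are chosen high in the chain, so that their protected cosets avoid the currently endangered targets, together with a verification---delicate precisely when $\varkappa$ is singular and $|H_\gamma|\to\varkappa$---that the constrained part stays of size $<\varkappa$ at every stage. A secondary complication is that for non-abelian $G$ the right-coset space $H_\gamma\backslash G$ carries no group structure, so the condition that $A_nA_n^{-1}$ miss a right coset must be manipulated through the right action of $G$ on $H_\gamma\backslash G$ rather than through a quotient group. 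I expect this bookkeeping, rather than the reduction, to be the crux of the proof.
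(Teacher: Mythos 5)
Your reduction is sound: fixing a chain $\{H_\gamma:\gamma<\lambda\}$ with union $G$, where $\lambda=cf(\varkappa)$ is regular, any $X$ with $|X|<\lambda$ lies in some $H_\gamma$, so it suffices that each $A_nA_n^{-1}$ miss a right coset of every $H_\gamma$; and your disposal of the case $cf(\varkappa)=\w$ (cells of size $<\varkappa$) is complete and matches the paper's second remark. But for $cf(\varkappa)>\w$ --- which is the entire content of the theorem, covering in particular every regular uncountable $\varkappa$ --- you do not give a proof. You describe a transfinite recursion with reserved ``witness cosets'' and a priority/injury bookkeeping, you correctly identify the fatal tension (an element of $G$ could be barred from all countably many colours simultaneously, since protecting a coset of $H_\gamma$ for colour $n$ forbids colour $n$ on a union of $|H_\gamma|$-sized sets that grows as the recursion proceeds), and then you explicitly defer its resolution (``I expect this bookkeeping \dots to be the crux of the proof''). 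Naming the obstruction is not the same as overcoming it: as written, the recursion may simply reach an element admitting no legal colour, and no mechanism is offered that prevents this. So the main case is a gap, not a proof.

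For contrast, the paper sidesteps the recursion entirely by making the colouring canonical rather than built element by element. It takes a continuous increasing chain $\{G_\alpha:\alpha<\varkappa\}$ indexed by all of $\varkappa$ (not merely by $cf\varkappa$) with each $|G_\alpha|<\varkappa$, fixes coset representatives $X_\alpha$ with $G_{\alpha+1}=G_\alpha X_\alpha$, and expands each $g\ne e$ uniquely as a finite product $x_{\alpha_s}\cdots x_{\alpha_1}$ with $x_{\alpha_i}\in X_{\alpha_i}$ and $\alpha_1>\alpha_2>\cdots$. Writing each $\alpha_i=\beta_i+n_i$ with $\beta_i$ limit, the colour of $g$ is the finite sequence of residues $n_i$, an element of the countable set $Seq(\w)$. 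The covering bound is then verified directly: given $K$ with $|K|<cf\varkappa$, one picks $\gamma$ above everything in $K$ whose residue $f(\gamma)$ avoids the sequence $s$, takes $h\in X_\gamma$, and checks $KH_s\cap hH_s=\varnothing$ by comparing expansions above and below $G_\gamma$. To complete your argument you would either have to carry out your bookkeeping in full (and it is genuinely unclear that it can be done as described), or switch, as the paper does, to a colouring determined by intrinsic data of each element, for which no scheduling or injury argument is needed.
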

\begin{proof}
If $G$ is countable, the statement is trivial: take any partition of $G$ into finite subsets. For $\varkappa>\aleph_0$, we choose a family $\{G_\alpha:\alpha<\varkappa\}$ of subgroups of $G$ such that
\begin{itemize}
\item[(1)] $G_0=\{e\}$ and $G=\bigcup\limits_{\alpha<\varkappa}G_\alpha$, $e$ is the identity of $G$;
\item[(2)] $G_\alpha\subset G_\beta$ for all $\alpha<\beta<\varkappa$;
\item[(3)] $\bigcup\limits_{\alpha<\beta}G_\alpha=G_\beta$ for each limit ordinal $\beta<\varkappa$;
\item[(4)] $|G_\alpha|<\varkappa$ for each $\alpha<\varkappa$.
\end{itemize}
Following \cite{b11}, for each $\alpha<\varkappa$, we decompose $G_{\alpha+1}\setminus G_\alpha$ into right cosets 
by $G_\alpha$ and choose some system $X_\alpha$ of representatives so $G_{\alpha+1}=G_{\alpha}X_{\alpha}$. 
Take an arbitrary element $g\in G\setminus\{e\}$ and choose the smallest subgroup $G_\alpha$ with $g\in G_\alpha$. 
By $(3)$, $\alpha=\alpha_1+1$ for some ordinal $\alpha_1<\varkappa$. 
Hence $g\in G_{\alpha_1+1}\setminus G_{\alpha_1}$ and there exists $g_1\in G_{\alpha_1}$, $x_{\alpha_1}\in X_{\alpha_1}$ such that $g=g_1x_{\alpha_1}$. If $g_1\neq e$, we choose the ordinal $\alpha_2$ and elements $g_2\in G_{\alpha_2+1}\setminus G_{\alpha_2}$ and $x_{\alpha_2}\in X_{\alpha_2}$ such that $g_1=g_2x_{\alpha_2}$. Since the set of ordinals $\{\alpha:\alpha<\varkappa\}$ is well-ordered, after finite number $s(g)$ of steps, we get the representation 
$$g=x_{\alpha_{s(g)}}x_{\alpha_{s(g)-1}}...x_{\alpha_2}x_{\alpha_1}\text{, } x_{\alpha_i}\in X_{\alpha_i}.$$
We note that this representation is unique and put
$$\gamma_1(g)=\alpha_1,\text{ }\gamma_2(g)=\alpha_2,...,\text{ }\gamma_{s(g)}(g)=\alpha_{s(g)},\text{ }\max(g)=\gamma_1(g).$$

Each ordinal $\alpha<\varkappa$ can be written uniquely as $\alpha=\beta+n$ where $\beta$ is a limit ordinal and $n\in\w$. 
We put $f(\alpha)=n$ and denote by $Seq(\w)$ the set of all finite sequences of elements of $\w$. 
Then we define a mapping $\chi:G\setminus\{e\}\to Seq(\w)$ by 
$$\chi(g)=f(\gamma_{s(g)}(g))f(\gamma_{s(g)-1}(g))...f(\gamma_2(g))f(\gamma_1(g)),$$
and, for each $s\in Seq(\w)$, put $H_s=\chi^{-1}(s)$. Since the set $Seq(\w)$ is countable, it suffices to prove that $cov(H_sH_s^{-1})\ge cf\varkappa$ for each $s\in Seq(\w)$.

We take an arbitrary $s\in Seq(\w)$ and an arbitrary $K\subseteq G$ such that $|K|< cf\varkappa$. 
Then we choose $\gamma<\varkappa$ such that $\gamma>\max g$ for each $g\in K$ and $f(\gamma)\notin\{s_1,...,s_n\}$. 
We pick $h\in X_\gamma$ and show that $KH_s\cap hH_s=\varnothing$.

If $g\in KH_s$ and $\gamma_i(g)\ge\gamma$ then $f(\gamma_i(g))\in\{s_1,...,s_n\}$. 
To prove this we take an arbitrary $g\in KH_s$ and fix $k\in K$, $x\in H_s$ such that $g=kx$.
Let $i\in\w$ be the length of the representation of $x$ after $G_\gamma$, which means that there exist 
$x_1\in X_{\gamma_1(x)},...,x_i\in X_{\gamma_i(x)}$ such that $x=yx_i...x_1$, for some $y\in G_\gamma$.
As $x\in H_s$ it follows that $f(x_j)\in\{s_1,...,s_n\}$, for $1\le j \le i$ and since $k\in K\subseteq G_\gamma$ 
we have that $g=kx=zx_{\alpha_i}...x_{\alpha_1}$, where $z\in G_\gamma$. So for any $i\le s(g)$ for which 
$\gamma_i(g)\ge\gamma$ we get that $f(\gamma_i(g))\in\{s_1,...,s_n\}$.

Now if $g'\in hH_s$ then the representations of $g'$ and $h^{-1}g$ after $G_{\gamma+1}$ are equal with the same
length $i$ due to the previous argument as $h\in G_{\gamma+1}$. But they are different after $G_{\gamma}$ as 
$f(\gamma_{i+1}(h^{-1}g'))\in\{s_1,...,s_n\}$, and then since, $h\notin G_\gamma$, 
$f(\gamma_{i+1}(g'))=f(\gamma)\notin\{s_1,...,s_n\}$. 

Hence $KH_s\cap hH_s=\varnothing$, so $h\notin KH_sH_s^{-1}$ and $cov(H_sH_s^{-1})$ can not be less then $cf\varkappa$.
\end{proof}
\begin{Th}\label{t2}
Let $\lambda,\varkappa$ be infinite cardinals, $\lambda<\varkappa$ and let $\{H_\alpha:\alpha<\varkappa\}$ be a family of groups such that $|H_\alpha|\le\lambda$ for each $\alpha<\varkappa$. Let $G$ be a subgroup of the direct product $H=\otimes_{\alpha<\varkappa}H_\alpha$ such that $|G|=\varkappa$. Then there exists a partition $G=\bigcup\limits_{n<\w}A_n$ such that $cov(A_nA_n^{-1})=\varkappa$ for each $n\in\w$.
\end{Th}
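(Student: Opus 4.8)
The plan is to partition $G$ according to the \emph{cardinality of the support}. For $g\in G$ write $\mathrm{supp}(g)=\{\alpha<\varkappa:\pi_\alpha(g)\neq e\}$, where $\pi_\alpha\colon H\to H_\alpha$ is the coordinate projection; since $G$ sits inside the (restricted) product, each $\mathrm{supp}(g)$ is finite. I would set $A_n=\{g\in G:|\mathrm{supp}(g)|=n\}$ for $n\in\w$, discard those $A_n$ that happen to be empty, and relabel the remaining cells as $A_0,A_1,\dots$. (There are infinitely many nonempty cells: by the Lemma below applied with $S=\varnothing$, the whole group $G$, being of size $\varkappa>\lambda$, cannot have all supports bounded, so support cardinalities are unbounded.) This yields a partition of $G$ into countably many cells, so it remains to verify $cov(A_nA_n^{-1})=\varkappa$ for each $n$. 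As $|G|=\varkappa$ we automatically have $cov(\cdot)\le\varkappa$, and the real task is to show that \emph{no} $X\subseteq G$ with $|X|<\varkappa$ satisfies $XA_nA_n^{-1}=G$.

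The first reduction is a support bound. Distinct coordinates commute, so $(ab^{-1})_\alpha=a_\alpha b_\alpha^{-1}$ and hence $\mathrm{supp}(ab^{-1})\subseteq\mathrm{supp}(a)\cup\mathrm{supp}(b)$. Thus for $a,b\in A_n$ we get $|\mathrm{supp}(ab^{-1})|\le 2n$, i.e. $A_nA_n^{-1}\subseteq B_n:=\{g\in G:|\mathrm{supp}(g)|\le 2n\}$. It therefore suffices to escape the larger set $B_n$: I must produce, for every $X$ with $|X|<\varkappa$, an element $h\in G$ with $k^{-1}h\notin B_n$ for all $k\in X$.

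The engine of the argument, and the step I expect to be the main obstacle, is the following purely structural lemma, which guarantees elements of arbitrarily large support. \emph{Lemma: if $K\le\bigotimes_{\beta\in I}H_\beta$ with $|H_\beta|\le\lambda$ for all $\beta$, and every element of $K$ has support of size $\le m$, then $|K|\le\lambda$.} I would prove this by induction on $m$, the case $m=0$ being trivial. For $m\ge 1$, assuming $|K|>\lambda$, I may assume the maximal support size in $K$ is exactly $m$ (otherwise apply the inductive hypothesis for $m-1$); fix $k_0$ with $F_0:=\mathrm{supp}(k_0)$ of size $m$. The subgroup $K'=\{k\in K:k_\beta=e\text{ for all }\beta\in F_0\}$ has index $[K:K']\le\prod_{\beta\in F_0}|H_\beta|\le\lambda^m=\lambda$ (via $k\mapsto(k_\beta)_{\beta\in F_0}$), so $|K'|>\lambda$; pick any $k_1\in K'\setminus\{e\}$. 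Then $k_0k_1$ has support $F_0\sqcup\mathrm{supp}(k_1)$ of size strictly greater than $m$, contradicting maximality. This finishes the Lemma.

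The escape now follows. Given $X$ with $|X|<\varkappa$, put $S=\bigcup_{x\in X}\mathrm{supp}(x)$, a set of cardinality $<\varkappa$. The restriction homomorphism $\rho\colon G\to\bigotimes_{\beta\notin S}H_\beta$ has kernel $\{g\in G:\mathrm{supp}(g)\subseteq S\}$, of size at most $\max(|S|,\lambda)<\varkappa$, so $|\rho(G)|=\varkappa>\lambda$. By the contrapositive of the Lemma, $\rho(G)$ contains an element of support size $>2n$; equivalently there is $h\in G$ with $|\mathrm{supp}(h)\setminus S|>2n$. For each $k\in X$ and each $\alpha\in\mathrm{supp}(h)\setminus S$ we have $k_\alpha=e$, whence $(k^{-1}h)_\alpha=h_\alpha\neq e$; thus $\mathrm{supp}(k^{-1}h)\supseteq\mathrm{supp}(h)\setminus S$ has size $>2n$, so $k^{-1}h\notin B_n\supseteq A_nA_n^{-1}$. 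Therefore $h\notin XA_nA_n^{-1}$, giving $XA_nA_n^{-1}\neq G$ and hence $cov(A_nA_n^{-1})=\varkappa$, as required. I note that this argument is exactly where the product hypothesis pays off over Theorem~1: one needs only a support-set of size $<\varkappa$ to avoid, rather than an ordinal bound below $cf(\varkappa)$, which is what lets the estimate reach the full $\varkappa$ even for singular $\varkappa$.
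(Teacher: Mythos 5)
Your proposal is correct and follows essentially the same route as the paper: partition by support cardinality, observe $|\mathrm{supp}(ab^{-1})|\le 2n$ on $A_nA_n^{-1}$, and escape any small translating set $X$ by finding an element whose support avoids the coordinates touched by $X$ and is larger than $2n$. The only difference is one of completeness: your inductive Lemma (a subgroup with supports bounded by $m$ has cardinality at most $\lambda$) and your handling of possibly empty cells supply justifications that the paper's proof leaves implicit.
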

\begin{proof}
For each $g\in G$, $supt(g)$ denotes the number of non-identity coordinates of $g$, $\chi(g)=|supt(g)|$. For each $h\in\w$ we put $$A_n=G\cap\chi^{-1}(n)$$
and show that $cov(A_nA_n^{-1})=\varkappa$.

We take an arbitrary $K\subset G$ such that $|K|<\varkappa$ and denote
$$S=\{\alpha<\varkappa: pr_\alpha g\neq e_{\alpha}\text{ for some }g\in K\},\text{ }T=\varkappa\setminus S,$$
$$G_T=G\cap\otimes_{\alpha\in T}H_\alpha.$$
Since $\lambda<\varkappa$ and $|G|=\varkappa$, we have $|G_T|=\varkappa$. If $g\in KA_nA_n^{-1}\cap G_T$ then $|supt(g)|\le2n$. On the other hand, for every $m\in\w$, there is $h\in G_T$ such that $|supt(h)|>m$. Hence, $G_T\setminus KA_nA_n^{-1}\neq\varnothing$.
\end{proof}
\begin{Rm}\end{Rm} It is well-known \cite[Theorems 23.1 and 24.1]{b3}
that each Abelian group can be embedded into the direct product of countable groups. Applying Theorem~\ref{t2}, we confirm conjecture for Abelian groups. Moreover, if a group $G$ of cardinality $\varkappa$ has an Abelian homomorphic image of cardinality $\varkappa$  (in particular, if $G$ is a free group of rank $\varkappa$) then Conjecture is valid for $G$.

\begin{Rm}\end{Rm}
Every infinite group $G$ can be written as a union $G=\bigcup\limits_{\alpha<cf\varkappa}H_\alpha$ of subsets of cardinality $<\varkappa$ (and so $cov(H_\alpha H_\alpha^{-1})=\varkappa$). Hence, Conjecture holds also if $cf\varkappa=\aleph_0$.

\end{document}